\begin{document}
\baselineskip = 16pt

\newcommand \ZZ {{\mathbb Z}}
\newcommand \NN {{\mathbb N}}
\newcommand \RR {{\mathbb R}}
\newcommand \CC {{\mathbb C}}
\newcommand \PR {{\mathbb P}}
\newcommand \AF {{\mathbb A}}
\newcommand \GG {{\mathbb G}}
\newcommand \QQ {{\mathbb Q}}
\newcommand \bcA {{\mathscr A}}
\newcommand \bcC {{\mathscr C}}
\newcommand \bcD {{\mathscr D}}
\newcommand \bcF {{\mathscr F}}
\newcommand \bcG {{\mathscr G}}
\newcommand \bcH {{\mathscr H}}
\newcommand \bcM {{\mathscr M}}
\newcommand \bcJ {{\mathscr J}}
\newcommand \bcL {{\mathscr L}}
\newcommand \bcO {{\mathscr O}}
\newcommand \bcP {{\mathscr P}}
\newcommand \bcQ {{\mathscr Q}}
\newcommand \bcR {{\mathscr R}}
\newcommand \bcS {{\mathscr S}}
\newcommand \bcV {{\mathscr V}}
\newcommand \bcW {{\mathscr W}}
\newcommand \bcX {{\mathscr X}}
\newcommand \bcY {{\mathscr Y}}
\newcommand \bcZ {{\mathscr Z}}
\newcommand \goa {{\mathfrak a}}
\newcommand \gob {{\mathfrak b}}
\newcommand \goc {{\mathfrak c}}
\newcommand \gom {{\mathfrak m}}
\newcommand \gon {{\mathfrak n}}
\newcommand \gop {{\mathfrak p}}
\newcommand \goq {{\mathfrak q}}
\newcommand \goQ {{\mathfrak Q}}
\newcommand \goP {{\mathfrak P}}
\newcommand \goM {{\mathfrak M}}
\newcommand \goN {{\mathfrak N}}
\newcommand \uno {{\mathbbm 1}}
\newcommand \Le {{\mathbbm L}}
\newcommand \Spec {{\rm {Spec}}}
\newcommand \Gr {{\rm {Gr}}}
\newcommand \Pic {{\rm {Pic}}}
\newcommand \Jac {{{J}}}
\newcommand \Alb {{\rm {Alb}}}
\newcommand \Corr {{Corr}}
\newcommand \Chow {{\mathscr C}}
\newcommand \Sym {{\rm {Sym}}}
\newcommand \Prym {{\rm {Prym}}}
\newcommand \cha {{\rm {char}}}
\newcommand \eff {{\rm {eff}}}
\newcommand \tr {{\rm {tr}}}
\newcommand \Tr {{\rm {Tr}}}
\newcommand \pr {{\rm {pr}}}
\newcommand \ev {{\it {ev}}}
\newcommand \cl {{\rm {cl}}}
\newcommand \interior {{\rm {Int}}}
\newcommand \sep {{\rm {sep}}}
\newcommand \td {{\rm {tdeg}}}
\newcommand \alg {{\rm {alg}}}
\newcommand \im {{\rm im}}
\newcommand \gr {{\rm {gr}}}
\newcommand \op {{\rm op}}
\newcommand \Hom {{\rm Hom}}
\newcommand \Hilb {{\rm Hilb}}
\newcommand \Sch {{\mathscr S\! }{\it ch}}
\newcommand \cHilb {{\mathscr H\! }{\it ilb}}
\newcommand \cHom {{\mathscr H\! }{\it om}}
\newcommand \colim {{{\rm colim}\, }} 
\newcommand \End {{\rm {End}}}
\newcommand \coker {{\rm {coker}}}
\newcommand \id {{\rm {id}}}
\newcommand \van {{\rm {van}}}
\newcommand \spc {{\rm {sp}}}
\newcommand \Ob {{\rm Ob}}
\newcommand \Aut {{\rm Aut}}
\newcommand \cor {{\rm {cor}}}
\newcommand \Cor {{\it {Corr}}}
\newcommand \res {{\rm {res}}}
\newcommand \red {{\rm{red}}}
\newcommand \Gal {{\rm {Gal}}}
\newcommand \PGL {{\rm {PGL}}}
\newcommand \Bl {{\rm {Bl}}}
\newcommand \Sing {{\rm {Sing}}}
\newcommand \spn {{\rm {span}}}
\newcommand \Nm {{\rm {Nm}}}
\newcommand \inv {{\rm {inv}}}
\newcommand \codim {{\rm {codim}}}
\newcommand \Div{{\rm{Div}}}
\newcommand \sg {{\Sigma }}
\newcommand \DM {{\sf DM}}
\newcommand \Gm {{{\mathbb G}_{\rm m}}}
\newcommand \tame {\rm {tame }}
\newcommand \znak {{\natural }}
\newcommand \lra {\longrightarrow}
\newcommand \hra {\hookrightarrow}
\newcommand \rra {\rightrightarrows}
\newcommand \ord {{\rm {ord}}}
\newcommand \Rat {{\mathscr Rat}}
\newcommand \rd {{\rm {red}}}
\newcommand \bSpec {{\bf {Spec}}}
\newcommand \Proj {{\rm {Proj}}}
\newcommand \pdiv {{\rm {div}}}
\newcommand \CH {{\it {CH}}}
\newcommand \wt {\widetilde }
\newcommand \ac {\acute }
\newcommand \ch {\check }
\newcommand \ol {\overline }
\newcommand \Th {\Theta}
\newcommand \cAb {{\mathscr A\! }{\it b}}

\newenvironment{pf}{\par\noindent{\em Proof}.}{\hfill\framebox(6,6)
\par\medskip}

\newtheorem{theorem}[subsection]{Theorem}
\newtheorem{conjecture}[subsection]{Conjecture}
\newtheorem{proposition}[subsection]{Proposition}
\newtheorem{lemma}[subsection]{Lemma}
\newtheorem{remark}[subsection]{Remark}
\newtheorem{remarks}[subsection]{Remarks}
\newtheorem{definition}[subsection]{Definition}
\newtheorem{corollary}[subsection]{Corollary}
\newtheorem{example}[subsection]{Example}
\newtheorem{examples}[subsection]{examples}
\title{Geometry of quintics in $\PR^3$ and the Craighero-Gattazzo surface of general type}
\author{Kalyan Banerjee}

\address{HRI, India}

\email{banerjeekalyan@hri.res.in}

\begin{abstract}
In this paper we study the question whether the tri-canonical system on the Craighero-Gattazzo surface is base point free and at which points does it separate tangent vectors. Also we study the non-rationality of the normalization of the quotient of a general curve (under a given involution) in the product linear system of cubics and quadrics on a singular quintic in $\PR^3$ with four elliptic singularities.
\end{abstract}

\maketitle

\section{Introduction}
The smooth projective surfaces of general type with geometric genus equal to zero are very important from the perspective of algebraic cycles. One of the central open problem in algebraic cycles, the Bloch conjecture predicts that the Chow group of zero cycles on such surfaces is isomorphic to the group of integers. Also another open problem is to classify all surfaces of general type with geometric genus zero.

The recent paper concerns about an example of a surface of general type with geometric genus zero first invented by Craighero and Gattazzo in \cite{CG}. It has also been studied in \cite{DW}. To construct this surface we start with a quintic surface $S$ in $\PR^3$ which is invariant under an involution and having four elliptic singular points. Then we blow up these singularities to obtain a minimal resolution of singularities $V$ equipped with an involution. This $V$ is a surface of general type with geometric genus zero. The aim of this paper is to study the tricanonical system $3K_V$ on the Craighero-Gattazzo surface of general type $V$ with geometric genus zero. To understand whether $3K_V$ is very ample or not. Here we do the calculation about $3K_V$ to deduce that it is base point free and it may not separate tangent vectors at certain points, hence it is not very ample, but we conclude that it is base point free.

Another important thing studied in the paper is the non-rationality of the normalization of the quotient of a general member in the product linear system of cubics and quadrics on  the quintic that we start with. This observation might be helpful to understand the Bloch conjecture on these surfaces.

So the main result of this paper is the following:

\textit{Let $S$ be a quintic in $\PR^3$ invariant under an involution $\sigma$ on $\PR^3$ with four elliptic singularities at the points $[1:0:0:0],[0:1:0:0],[0:0:1:0],[0:0:0:1]$. Let $C$ be a general member of the product linear system of quadrics and cubics on  the quintic $S$ as mentioned in \cite{CG}. Then the normalization of $C/\sigma$ is non-rational.}

This main result is useful in proving Bloch's conjecture for the Craighero-Gattazzo surface which will be dealt in a separate article.

{\small \textbf{Acknowledgements:} The author thanks Vladimir Guletskii for many useful conversations related to the theme of the paper. The author also thanks Indian Statistical Institute Bangalore for hosting this project under the ISF-UGC grant.}

Notation: We work over an algebraically closed ground field of characteristic zero.

\section{Brief recall of the Craighero-Gattazo surface of general type}
The Craighero-Gattazzo surface of general type was discovered by P.Craighero and R. Gattazzo in the paper \cite{CG}. This is a surface of general type with geometric genus and irregularity zero. In the paper by \cite{DW} there is a nice description of this surface starting from a quintic surface in $\PR^3$.

Let $\sigma$ be the automorphism of $\mathbb P^3$ given by the formula,
$$\sigma(X,Y,Z,T)=(T,X,Y,Z)\;.$$
Please note that it is of order $4$, therefore $\sigma^2$ is an involution. Let $S$ be a $\sigma$ invariant quintic in $\mathbb P^3$ such that it has simple elliptic singularities of degree $1$ locally isomorphic to $z^2+x^3+y^6=0$ at the reference points
$$(1,0,0,0),\quad (0,1,0,0),\quad (0,0,1,0),\quad (0,0,0,1)\;.$$
Such a quintic can be given by an explicit equation as mentioned in the paper \cite{DW} by Dolgachev and Werner . Let $\pi:V\lra S$ be the minimal resolution of singularities of this surface $S$. Then following the arguments in the paper by Dolgachev and Werner in \cite{DW}, $V$ is a surface of general type falling under the numerical Godeaux class of such surfaces. It has zero geometric genus and zero irregularity.\\

The fixed point set of the involution $\sigma^2$ is given by the union of lines,
$$r:=\{X+Z=Y+T=0\},\quad r':=\{X-Z=Y-T=0\}\;.$$
The line $r'$ intersects $S$ at five points. Let
$$\{Q_1,Q_2,Q_3,Q_4,Q_5\}\;,$$
be the set of  five points on $V$ corresponding to the five points in $S\cap r'$.
Then by blowing up $V$ at this points we get a new surface $V'$ with a regular map $b:V'\lra V$. The involution $\sigma^2$ acts on $V'$. Let us consider the surface $V'/\sigma^2$. Let
$$p:V'\lra V'/\sigma^2\;,$$
be the canonical projection map. Let us denote $V'/\sigma^2$ by $F$.  It is proved in the paper by Dolgachev and Werner that $F$ is a non-singular rational surface [Proposition 3.1 in \cite{DW}]. Furthermore the proposition $3.1$ in \cite{DW} tell us that there is a linear system of genus three curves on $V'$ which is the pre-image of a linear system of elliptic curves on $F$.

\section{Computation about $3K_V$}
We try to understand the tricanonical system $3K_V$ on $V$ is very ample or not. For that first we try to understand whether it is base-point free or not. We prove:

\begin{theorem}
The tricanonical system $3K_V$ on $V$ is base-point free.
\end{theorem}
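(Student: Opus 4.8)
The plan is to reduce base-point freeness of $|3K_V|$ to the base-point freeness of the complete canonical system on a single bicanonical curve, exploiting the numerical Godeaux invariants $K_V^2=1$, $p_g(V)=q(V)=0$, $\chi(\mathcal{O}_V)=1$ recalled in Section 2. First I would record, via Riemann--Roch together with the vanishing $h^1(mK_V)=h^2(mK_V)=0$ for $m\geq 2$ on a minimal surface of general type, that $h^0(2K_V)=\chi(2K_V)=1+K_V^2=2$ and $h^0(3K_V)=\chi(3K_V)=1+3K_V^2=4$. Thus $|2K_V|$ is a pencil and $|3K_V|$ maps $V$ to $\PR^3$.

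The key observation is an adjunction computation along a member $D\in|2K_V|$. Since $D\sim 2K_V$ we have $K_V+D\sim 3K_V$, so the dualizing sheaf satisfies $\omega_D=\mathcal{O}_D(K_V+D)=\mathcal{O}_D(3K_V)$; in particular $\deg\omega_D=3K_V\cdot 2K_V=6K_V^2=6$ and $D$ has arithmetic genus $4$. Twisting the ideal sequence of $D$ by $3K_V$ gives
\[
0\to\mathcal{O}_V(K_V)\to\mathcal{O}_V(3K_V)\to\omega_D\to 0,
\]
and since $H^0(V,K_V)=0$ (because $p_g=0$) and $H^1(V,K_V)\cong H^1(V,\mathcal{O}_V)^\vee=0$ (because $q=0$), the restriction map induces an isomorphism $H^0(V,3K_V)\xrightarrow{\ \sim\ }H^0(D,\omega_D)$. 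Hence the tricanonical system cuts out on $D$ precisely the complete canonical system $|\omega_D|$.

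Now I would argue as follows. The bicanonical pencil $|2K_V|$ of a numerical Godeaux surface has no fixed part, so by Bertini its general member $D$ is an integral curve, smooth away from the base locus $\mathrm{Bs}|2K_V|$, and every member of the pencil passes through $\mathrm{Bs}|2K_V|$. If the general $D$ is in fact a smooth curve of genus $4$, then $\omega_D=K_D$, and the canonical system of a smooth curve of genus $\geq 1$ is base-point free; transporting this through the isomorphism above shows $|3K_V|$ has no base point on $D$. Since such smooth curves sweep out all of $V$ and pass through every point of $\mathrm{Bs}|2K_V|$, it would follow that $|3K_V|$ is base-point free on all of $V$, including at the bicanonical base points.

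The hard part is precisely the behaviour along $\mathrm{Bs}|2K_V|$, where Bertini gives no smoothness and one must exclude the degenerate possibility that the whole pencil is singular at a base point. As $(2K_V)^2=4$, the base scheme has length $4$, and a point at which every member were singular would, by the bound $I_p\geq \mathrm{mult}_p(A)\,\mathrm{mult}_p(B)\geq 4$ on the intersection multiplicity of two generators $A,B$, absorb the entire base locus. I would rule this out --- and more generally show the general bicanonical curve is smooth at each base point --- using the explicit quintic model with its involution from Section 2 to locate $\mathrm{Bs}|2K_V|$ concretely and verify that the pencil separates tangent directions there. Should some base point resist this, the fallback is to work directly on the reduced Gorenstein curve $D$ and invoke that an integral Gorenstein curve of arithmetic genus $\geq 1$ has base-point-free dualizing sheaf, so that $|\omega_D|$ has no base point even at the singular locus. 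This local analysis at the bicanonical base points is the main obstacle.
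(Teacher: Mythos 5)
Your route is genuinely different from the paper's. The paper works directly with the explicit Craighero--Gattazzo model: it writes down the four cubics $C_0,\dots,C_3$ whose pullback (minus the exceptional curves) gives $3K_V$, computes by brute force that the base locus of the linear system they span in $\PR^3$ is exactly the four reference points (the elliptic singularities), and concludes that after resolution the tricanonical system has no base points. Your argument is instead the cohomological one through the bicanonical pencil --- the restriction sequence giving $H^0(V,3K_V)\cong H^0(D,\omega_D)$ from $p_g=q=0$, followed by base-point freeness of $\omega_D$ --- which is essentially Miyaoka's proof of the general statement for numerical Godeaux surfaces; the paper itself cites that theorem ([M, Theorem 2]) as an alternative justification. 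Your strategy, if completed, proves more (it uses nothing specific to this surface), and the Riemann--Roch and adjunction computations you carry out are correct.

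However, two genuine gaps remain. First, you assert that $|2K_V|$ has no fixed part and that its general member $D$ is integral; neither follows from Bertini for a pencil with base points on an arbitrary numerical Godeaux surface, and excluding a fixed component or a reducible/non-reduced general member is precisely the delicate core of Miyaoka's argument. Note that once $D$ is known to be integral and Gorenstein of arithmetic genus $4\geq 2$, the dualizing sheaf $\omega_D$ is globally generated (your own ``fallback''), which already disposes of the behaviour at $\mathrm{Bs}|2K_V|$ with no local analysis; so the step you flag as the main obstacle is actually the easy one, and the missing step is integrality. Second, even granting that the general member is integral, your sweep-out only shows $|3K_V|$ is free at points lying on some integral member of the pencil: a point outside $\mathrm{Bs}|2K_V|$ lies on exactly one member, and if that member happens to be one of the finitely many non-integral ones, your argument says nothing there. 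A separate treatment of the special members (or the paper's explicit computation of the base locus of the cubic system) is needed to close this.
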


\begin{proof}
This follows from \cite{M}[Theorem 2], since the author has come to know this fact after the proof of the theorem in an explicit way, the alternative proof is included.
Let $C_0,C_1,C_2,C_3$ denote the cubics in $\PR^3$ generating a linear system in $\PR^3$ whose pullback minus four elliptic curves give $3K_V$. In precise terms we have
$$C_0=T\{(3r-2)[X+mY+r^2Z]T+(r+1)XY]+(-6r^2+2r+2)XZ+(-2r^2-5r+5)YZ\}$$
$$C_1=X\{(3r-2)[Y+mZ+r^2T]X+(r+1)YZ]+(-6r^2+2r+2)YT+(-2r^2-5r+5)ZT\}$$
$$C_2=Y\{(3r-2)[Z+mT+r^2X]Y+(r+1)ZT]+(-6r^2+2r+2)ZX+(-2r^2-5r+5)TX\}$$
$$C_3=Z\{(3r-2)[T+mX+r^2Y]Z+(r+1)TX]+(-6r^2+2r+2)TY+(-2r^2-5r+5)XY\}$$
Call the four quadrics appearing in the above expression as $Q_0,Q_1,Q_2,Q_3$ then the intersection
$$C_0\cap C_1\cap C_2\cap C_3=(T\cap X\cap Y\cap Z)\cup (T\cap X\cap Y\cap Q_3)$$
$$\cup(T\cap X\cap Z\cap Q_2)\cup(T\cap X\cap Q_2\cap Q_3)$$
$$\cup (T\cap Y\cap Z\cap Q_1)\cup (T\cap Y\cap Q_1\cap Q_3)$$
$$\cup(T\cap Z\cap Q_1\cap Q_2)\cup(T\cap Q_1\cap Q_2\cap Q_3)$$
$$\cup (Q_0\cap X\cap Y\cap Z)\cup(Q_0\cap X\cap Y\cap Q_3)$$
$$\cup (Q_0\cap X\cap Q_2\cap Z)\cup(Q_0\cap Q_3\cap Q_2\cap X)$$
$$(Q_0\cap Q_1\cap Y\cap Z)\cup(Q_0\cap Q_1\cap Q_3\cap Y)$$
$$\cup(Q_0\cap Q_1\cap Q_2\cap Z)\cup(Q_0\cap Q_1\cap Q_2\cap Q_3)$$
Definitely $T\cap X\cap Y\cap Z$ is empty. Now consider the intersection $T\cap X\cap Y\cap Q_3$, that is we put $T=X=Y=0$ in $Q_3$ and it satisfies the equation of $Q_3$ for all $Z$. Therefore we get that this intersection is $[0:0:1:0]$. Similarly we get $[1:0:0:0],[0:1:0:0],[0:0:0:1]$ as the point of intersection of three of the hyperplanes $X=0,Y=0,Z=0,T=0$ with one of $Q_0,Q_1,Q_2,Q_3$.

Now we consider of the type $T\cap X\cap Q_2\cap Q_3$. So putting $T=X=0$ in the equation of $Q_2,Q_3$ we get that
$$YZ=0$$
So either $Y=0$ or $Z=0$ and the intersection contains the points $[0:1:0:0]$ and $[0:0:1:0]$. Similarly intersecting two of $X=0,Y=0,Z=0,T=0$ with two of $Q_0,Q_1,Q_2,Q_3$ we get the points $[1:0:0:0],[0:1:0:0],[0:0:1:0],[0:0:0:1]$ as points of intersection.

Now we consider the intersection of the form $T\cap Q_1\cap Q_2\cap Q_3$. Putting $T=0$ in the equation we get that
$$(XY+mZX+(r+1)YZ)=0$$
$$(3r-2)(ZY+r^2XY)+(-6r^2+2r+2)XZ=0$$
$$(3r-2)(mXZ+r^2YZ)+(-2r^2-5r+5)XY=0$$
which can be written in the matrix form
$$\left(
\begin{array}{ccc}
1 & (r+1) & m \\
r^2(3r-2) & 3r-2 & (-6r^2+2r+2) \\
(-2r^2-5r+5) & r^2(3r-2) & (3r-2)m \\
\end{array}
\right)
$$
whose determinant is
$$(3r-2)[(3r-2)m-r^2(-6r^2+2r+2)]-(r+1)
[mr^2(3r-2)^2-(-6r^2+2r+2)(-2r^2-5r+5)]$$
$$+m(3r-2)[r^4(3r-2)-(-2r^2-5r+5)]$$
this can be written as
$$(3r-2)^2m[1+r^4-r^3-r^2]-(3r-2)(-6r^4+2r^3+2r)$$
$$+
(r+1)(12r^4+26r^3-44r^2+10)-m(3r-2)(-2r^2-5r+5)$$
which is equal to
$$m(r^4(3r-2)^2+6r^3+11r^2-25r+10)+(18r^5-18r^4+4r^3-6r^2+4r)$$
$$+(12r^5+26r^4-44r^3+10r+12r^4+30r^3-44r^2+10)$$
which is
$$m(r^4(3r-2)^2+6r^3+11r^2-25r+10)+(-20r^2+4r+10)$$
now the part
$$m(r^4(3r-2)^2+6r^3+11r^2-25r+10)$$
is zero. Expanding it we get that
$$m(9r^6-12r^5+4r^4+6r^3+11r^2-25r-10)$$
which is
$$m(9r^3-21r^2+25r-19+30r^2-25r+10)=m(9r^3+9r^2-9)=0\;.$$
This is done by using the relation $r^3+r^2=1$. Since the determinant is $-20r^2+4r+10$ and it is relatively prime to $r^3+r^2-1$, it is non-zero. So the intersection of $T=0$ with $Q_1\cap Q_2\cap Q_3$ is $[0:1:0:0]$ and $[0:0:1:0]$ and $[1:0:0:0]$. Similarly we can check that the other intersections like $X\cap Q_0\cap Q_2\cap Q_3$ are one the reference points.

So we are left with only the intersection $Q_0\cap Q_1\cap Q_2\cap Q_3$. For that we prove that the four quadrics are in general position. That is in the parameter space of all quadrics they are linearly independent. So let us have
$$\sum_i c_i Q_i=0$$
which yields the matrix equation in the following form.
$$
\left(
  \begin{array}{cccc}
    a & b & c & d \\
    d & a & b & c \\
    c & d & a & b \\
    b & c & d & a \\
  \end{array}
\right)
$$
here
$$a=(r+1)(3r-2),\quad b=3r-2,\quad c=r^2(3r-2),\quad d=-2r^2-5r+5\;.$$
Since the above matrix is the matrix of a linear operator $T$ on a four dimensional vector space such that $T^4=Id$, we have that the matrix of the operator is non-singular, because it has four distinct eigenvalues. Therefore we conclude that the linear system given by the four cubics in $\PR^3$ has four reference points as four base points. Now when we resolve the singularities we get four rational curves corresponding to four base points. Suppose $3K_V$ has base points then it must come from base points from the linear system of cubics on the quintic $S$, which are precisely four reference points. Since they are blown up to four rational curves and codimension of the base locus of $3K_V$ must be $2$ we have that $3K_V$ is base point free.
\end{proof}

Now we try to understand whether the linear system $L$ given by the four cubics separates tangent vectors on $S$. So we prove the following:

\begin{theorem}
The linear system of cubics on $S$ separates tangent vectors except for the tangent vectors at four reference points.
\end{theorem}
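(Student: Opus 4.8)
The plan is to translate the statement into a ramification (immersion) condition for the rational map $\phi = [C_0 : C_1 : C_2 : C_3] : S \dashrightarrow \PR^3$ attached to the linear system $L$ of cubics. Recall that $L$ separates tangent vectors at a smooth point $p$ of $S$ precisely when the differential $d\phi_p : T_p S \to T_{\phi(p)}\PR^3$ is injective, equivalently when $\phi$ is unramified at $p$. Choosing a lift $\tilde p$ of $p$ to the affine cone over $S$, this injectivity fails exactly when there is a tangent vector $\tilde v$ to the cone over $S$ at $\tilde p$, not proportional to the radial (Euler) direction $\tilde p$, such that
$$\nabla C_i(\tilde p)\cdot \tilde v = \lambda\, C_i(\tilde p)\quad (i=0,1,2,3)\;,\qquad \nabla F(\tilde p)\cdot \tilde v = 0$$
for some scalar $\lambda$, where $F$ is the quintic defining $S$. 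By the Euler relation the radial direction $\tilde v = \tilde p$ always solves this system with $\lambda = 3$; hence $L$ fails to separate tangent vectors at $p$ if and only if this system admits a second, linearly independent solution.

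First I would record the differentials using the factorisation $C_i = \ell_i Q_i$, where $\ell_0 = T$, $\ell_1 = X$, $\ell_2 = Y$, $\ell_3 = Z$ are the coordinate forms and $Q_0,\dots,Q_3$ the quadrics of the previous theorem. Then $\nabla C_i = Q_i\,\nabla \ell_i + \ell_i\,\nabla Q_i$, and since $\nabla \ell_i$ is a constant coordinate vector this reduces each row of the Jacobian to the gradient of a single quadric together with one distinguished entry. I would next exploit the cyclic symmetry: since $\sigma(X,Y,Z,T)=(T,X,Y,Z)$ permutes the forms $C_0,C_1,C_2,C_3$ and preserves $S$, the ramification locus is $\sigma$-invariant, so it suffices to analyse finitely many strata up to this symmetry.

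I would then stratify $S$ according to how many of the coordinates $X,Y,Z,T$ vanish at $p$. On the open stratum where all four coordinates are nonzero, all $\ell_i(p)\neq 0$, and the system above becomes a genuine $4\times4$ linear condition on $\tilde v$ together with one tangency constraint; I expect the resulting determinantal equation, after clearing the $\ell_i$, to factor through the linear independence of $Q_0,\dots,Q_3$ already established, so that no extra solution $\tilde v$ exists there. On the lower strata, where one or more coordinate forms vanish, the corresponding rows degenerate to $\ell_i \nabla Q_i = 0$ or to $Q_i \nabla \ell_i$, and the analysis collapses to fewer variables; tracing these down, the only points of $S$ at which a second solution $\tilde v$ appears are the four reference points, exactly as in the base-point computation of the preceding theorem.

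The main obstacle will be the determinantal calculation governing the open stratum: showing that the degeneracy determinant, a polynomial in $r$ and the coordinates, does not vanish on $S$ away from the reference points. As in the previous proof this should come down to reducing modulo the relation $r^3 + r^2 = 1$ and checking that the relevant resultant is a nonzero multiple of a factor coprime to $r^3+r^2-1$, so that its only common zeros with $S$ are forced onto the coordinate strata and ultimately onto the reference points. At the reference points themselves, which are the elliptic singularities and the base points of $L$, the map $\phi$ is undefined and tangent vectors are not separated, which accounts for the stated exception.
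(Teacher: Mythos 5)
Your overall framing is sound and close in spirit to the paper's: both arguments reduce the statement to showing that the Jacobian of $(C_0,C_1,C_2,C_3)$ has maximal possible rank pointwise away from the four reference points, using the factorisation $C_i=\ell_i Q_i$ and the minimal relation $r^3+r^2=1$. One structural difference is that you impose the extra tangency constraint $\nabla F(\tilde p)\cdot\tilde v=0$ and work on $S$, whereas the paper proves the stronger statement that the cubics already separate tangent vectors on all of $\PR^3$ away from the reference points; this makes the equation of the quintic irrelevant and replaces your stratification by a short chart-by-chart check that the tangent hyperplanes $T_P(C_0)\cap T_P(C_1)\cap T_P(C_2)$ meet in a single point.

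The genuine gap is in your treatment of the open stratum. You propose to deduce the pointwise non-degeneracy of the gradient rows from ``the linear independence of $Q_0,\dots,Q_3$ already established.'' That independence lives in the vector space of quadratic forms ($\sum_i c_iQ_i=0$ identically forces all $c_i=0$); it says nothing about the rank of the matrix of gradients $\nabla C_i(\tilde p)$ at a \emph{fixed} point $\tilde p$ --- four linearly independent quadrics can perfectly well have linearly dependent gradients at a given point. The pointwise statement is the entire content of the theorem and must be verified by a separate explicit computation: the paper does this by writing out $T_P(C_0),T_P(C_1),T_P(C_2)$ in affine coordinates and showing that proportionality of two rows would force $3r^2+4r-4=0$, which is incompatible with $r^3+r^2=1$. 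Your proposal defers exactly this computation to ``I expect\dots'' and ``should come down to\dots'', and the shortcut you indicate for it (factoring through the independence of the $Q_i$ as forms) does not go through. The cyclic-symmetry reduction, the lower strata, and the discussion of the reference points are fine in outline, but without the determinantal verification the argument is a plan rather than a proof.
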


\begin{proof}
For the above it suffices to prove that the linear system separates tangent vectors on $\PR^3$ except for the four reference points. This precisely means that given any closed point  $P$ on $\PR^3$, and given any tangent vector $t$ in $T_P (\PR^3)$, there exists a member $D$ in the given linear system such that $P\in D$ and $t$ does not belong to $T_P (D)$. First observe that for any point $P$ in $\PR^3$ the cubics passing through $P$, in the linear system $L$ forms a sub-linear system $L_P$ of dimension $2$. Now suppose that $C$ is in $L_P$. Then $C$ can be written as
$$\lambda C_0+\mu C_1+\nu C_2+\rho C_3$$
where $C_0,C_1,C_2,C_3$ are the cubics mentioned in \cite{CG} spanning the linear system of cubics on the quintic.
Then
$$d_P(C)=\lambda d_P(C_0)+\mu d_P(C_1)+\nu d_P(C_2)+\rho d_P(C_3)$$
then the collections of $d_P(C)$ with $C$ from $L_P$ determine another linear system $dL_P$. Suppose that we can prove that this linear system is base point free, then it means that there exists $D$  such that $v$ is not in $T_P(D)$ and $P$ belongs to $D$. So it will follow that the linear system $L$ separates tangent vectors.

So suppose that $P$ is a point in $\PR^3$ written as $[x:y:z:t]$ such that $t$ is not zero and it is not $[0:0:0:1]$. Then in terms affine co-ordinates we can write $P$ to $(x,y,z)$. Now we compute the tangent space $T_P (C_0)$ and $T_P(C_1)$.
We have
$$T_P(C_i)=\frac{\partial{C_i}}{{\partial X}}|_{(x,y,z)}(X-x)
+\frac{\partial{C_i}}{{\partial Y}}|_{(x,y,z)}(Y-y)+\frac{\partial{C_i}}{{\partial Z}}|_{(x,y,z)}(Z-z)$$
which is  for $T_P(C_0)$
$$[(3r-2)+(r+1)(3r-2)y+(-6r^2+2r+2)z](X-x)+$$
$$[(3r-2)m+(3r-2)(r+1)x+(-2r^2-5r+5)z](Y-y)+$$
$$[(3r-2)r^2+(-6r^2+2r+2)x+(-2r^2-5r+5)y](Z-z)=0\;.$$
Similarly we have the equation for $T_P(C_1)$ given by
$$[(3r-2)(y+mz+r^2)(1+x)+(r+1)(3r-2)yz+(-6r^2+2r+2)y+(-2r^2-5r+5)z](X-x)$$
$$+[(3r-2)x^2+(3r-2)(r+1)xz+(-6r^2+2r+2)x](Y-y)$$
$$+[(3r-2)mx^2+(r+1)(3r-2)xy+(-2r^2-5r+5)x](Z-z)=0\;.$$
The equation of $T_P(C_2)$ is given by
$$[(3r-2)r^2y^2+(-6r^2+2r+2)yz+(-2r^2-5r+5)y](X-x)+$$
$$[(3r-2)(z+m+r^2x)(1+y)+(3r-2)(r+1)z+(-6r^2+2r+2)zx+(-2r^2-5r+5)x](Y-y)+$$
$$[(3r-2)y^2+(3r-2)(r+1)y+(-6r^2+2r+2)xy](Z-z)=0$$
Now we prove that $T_P(C_0)$ intersects $T_P(C_1)$ in a line. For that we have to prove that there does not exists a polynomial $\lambda(x,y,z)$ such that
$$\lambda(x,y,z)[x(3r-2)(r+1)+(3r-2)m+(-2r^2-5r+5)z]=$$
$$x^2(3r-2)+xz(r+1)(3r-2)+(-6r^2+2r+2)x\;.$$
This we get by writing the equations of $T_P(C_0),T_P(C_1)$ in the matrix form and trying to prove that the two rows we get are linearly independent. So $\lambda=ax+bz+c$ and putting this in the above equation we get that
$$(3r-2)(r+1)a=3r-2$$
$$b(-2r^2-5r+5)=0$$
which gives $a=1/r+1,b=0$
also we have
$$b(3r-2)(r+1)+a(-2r^2-5r+5)=(r+1)(3r-2)$$
which gives us that
$$-2r^2-5r+5=(r^2+2r+1)(3r-2)=3r^3-2r^2+6r^2-4r+3r-2$$
$$=3r^3+4r^2-r-2=r^2-r+1$$
which gives us that
$$3r^2+4r-4=0$$
which is not true. So we get that $T_P(C_0)$ intersects $T_P(C_1)$ in a line. Similarly $T_P(C_0)$ intersects $T_P(C_2)$ in a line and we prove that $T_P(C_1)$ intersects $T_P(C_2)$ in a line which is similar to the previous argument. So we get that $T_P(C_0)\cap T_P(C_1)\cap T_P(C_2)$ intersects only at one point by dimension counting. So we have $dL_P$ is base point free for all $P=[x:y:z:t]$ with $t$ not equal to zero and not $[0:0:0:1]$. For $[0:0:0:1]$ writing the equations for $T_P(C_i)$ for $i=1,2,3$ we get that the equations are $X=Y=Z=0$, which again intersect at the origin. So for $t$ not equal to zero we have $dL_P$ base point free. Similarly argument for $P$ with $x,y,z$ not equal to zero gives us that $dL_P$ is base point free. So $L$ separate tangent vectors. So for all points of $S$ which are not reference points the tangent vectors will be separated by $3K_V$.
\end{proof}

\subsection{Computation regarding $5K_V$}

We first determine the bundle $K_V$, interms of the pull-back of a $\sigma^2$ invariant hyperplane section and four elliptic curves. By the localization exact sequence and by using the fact that blow up is a birational map we have that
$$K_V=\pi^*(H)+\sum_{i=1}^4 n_iE_i$$
we find out the $n_i$ by Adjunction formula. We have
$$K_V.E_i=-n_i$$
and
$$E_i^2=-1\;.$$
So putting these in  the adjunction formula for $E_i$ we have
$$-1-n_i=0$$
which leads us to the fact that
$$n_i=1\;.$$
So we have that
$$K_V=\pi^*(H)-\sum_i E_i\;.$$
Now we compute $2K_V$. It is the pullback of the pencil of quadrics $\bcQ$. So we have
$$2K_V=\pi^*(\bcQ)+\sum_i n_iE_i$$
we have that
$$2K_V.E_i=-n_i\;.$$
This gives by adjunction formula that
$$-2-n_i=0$$
that is $n_i=-2$.
Now we do the same for the linear system associated to $3K_V$. We know that it is the pull-back of the $3$ dimensional linear system $\bcC$ of cubics.
That is
$$3K_V=\pi^*(\bcC)+n_iE_i$$
then we have
$$3K_V.E_i=-n_i$$
and we have $n_i=-3$.
So we have that
$$2K_V=\pi^*(\bcQ)-2\sum_i E_i$$
and
$$3K_V=\pi^*(\bcC)-3\sum_i E_i$$
therefore we have that
$$5K_V=\pi^*(\bcC.\bcQ)-5\sum_i E_i\;.$$

\section{Non-rationality of the push-down}
We consider the  linear system of quintics obtained by multiplying the generators of the pencil of quadrics $\bcQ$ and the pencil of cubics $\bcC$ restricted on the quintic. It has eight base points. So the intersection of a quintic from this linear system with the quintic $S$ are singular precisely at these base points. Now we can choose the quintic to be not containing the five fixed point of intersection of the intersection of $r'$ (one component of the fixed locus of $\sigma^2$). The other line $r$ containing the fixed points of $\sigma^2$ is contained in the quintic $S$. So its intersection with a general member of the linear  system is atmost five points. Since the points in the base locus are not contained in the fixed locus of $\sigma^2$ we get that the point in the fixed locus of $\sigma^2$ are non-singular points of the general curves from the linear system. Let us denote the involution $\sigma^2$ by $\tau$, a general member of the linear system on the quintic by $C$. Then we have following commutative diagram.
$$
  \diagram
   \wt{C}\ar[dd]_-{} \ar[rr]^-{} & & \wt{C/\tau} \ar[dd]^-{} \\ \\
  C \ar[rr]^-{} & & C/\tau
  \enddiagram
  $$
\begin{theorem}
We prove that the genus of $\wt{C/\tau}$ is positive and hence it is non-rational.
\end{theorem}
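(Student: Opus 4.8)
The plan is to realize $\wt{C}\to\wt{C/\tau}$ as the quotient double cover attached to the involution $\tau$ and then to run Riemann--Hurwitz. Since $\wt{C/\tau}$ is smooth and receives the smooth curve $\wt C$ as a degree-two cover, we have
$$2g(\wt C)-2=2\bigl(2g(\wt{C/\tau})-2\bigr)+R,$$
where $R$ is the number of points of $\wt C$ fixed by $\tau$ (equivalently the number of ramification points of $\wt C\to\wt{C/\tau}$). Rearranging,
$$g(\wt{C/\tau})=\frac{2g(\wt C)+2-R}{4},$$
so to prove positivity it suffices to produce a large lower bound for $g(\wt C)$ together with a small upper bound for $R$; concretely $g(\wt{C/\tau})>0$ as soon as $R\le 2g(\wt C)-2$.

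First I would compute $g(\wt C)$. The strict transform of a general $C$ under $\pi\colon V\lra S$ is a smooth member of the linear system $|5K_V|$: indeed $5K_V=\pi^*(\bcC\cdot\bcQ)-5\sum_i E_i$, away from the four reference points Bertini gives smoothness, and the resolution $\pi$ takes care of the reference points, so this strict transform is exactly the normalization $\wt C$. Applying adjunction on $V$ and using that $V$ is numerical Godeaux, hence $K_V^2=1$, gives
$$2g(\wt C)-2=5K_V\cdot(5K_V+K_V)=30\,K_V^2=30,$$
so that $g(\wt C)=16$.

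Next I would bound $R$. The fixed locus of $\tau=\sigma^2$ in $\PR^3$ is the union of the two lines $r$ and $r'$, so every point of $\wt C$ fixed by $\tau$ lies over $C\cap(r\cup r')$. By the choices already made, the eight base points lie off $r\cup r'$, the curve $C$ avoids the five points of $S\cap r'$, and the points of the fixed locus lying on $C$ are smooth points of $C$. Thus $C$ meets the fixed locus only along $r$; since $r\subset S$ and $C$ is cut out on $S$ by a quintic, $C\cap r$ consists of at most five points. At a smooth fixed point an involution acts by $-1$ on the tangent line, so each such point is a genuine ramification point of the double cover, and therefore $R\le 5$.

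Combining the two estimates,
$$g(\wt{C/\tau})=\frac{2\cdot 16+2-R}{4}=\frac{34-R}{4}\ge\frac{34-5}{4}>0,$$
so in fact $g(\wt{C/\tau})\ge 8$ and $\wt{C/\tau}$ is non-rational. I expect the main obstacle to be the justification of the two numerical inputs rather than the Riemann--Hurwitz bookkeeping: on the one hand, checking that the general strict transform really is a smooth irreducible member of $|5K_V|$ coinciding with the normalization of $C$ (so that $g(\wt C)=16$), and on the other hand, confirming that the ramification is exhausted by $C\cap r$, i.e.\ that no further fixed points of $\tau$ arise on the exceptional curves $E_i$ that $\wt C$ meets. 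Both concerns are forgiving, however: the argument needs only $R\le 2g(\wt C)-2=30$, and the bound $R\le 5$ sits comfortably inside this range, so even a crude control of the genus and the ramification already forces $g(\wt{C/\tau})>0$.
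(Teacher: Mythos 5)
Your route is genuinely different from the paper's (the paper works with the arithmetic genus $p_a(C)=76$ of the complete intersection in $\PR^3$ and the $\delta$-invariants of the singular points, then derives a divisibility contradiction after a long argument that a general member meets $r$ in four points), and your ramification bound $R\le 5$ is sound and in fact agrees with a parenthetical remark in the paper. However, there is a genuine gap in your key numerical input $g(\wt C)=16$. You apply Bertini to conclude that the strict transform of a general $C$ is smooth away from the four reference points, but Bertini only gives smoothness away from the base locus of the linear system actually being used, and the product system $\bcQ\cdot\bcC$ is a proper subsystem of $|5K_V|$ with base points that are \emph{not} confined to the reference points: the paper's setup has eight base points, and it asserts that a general $C$ is singular precisely there. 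The resolution $\pi$ is an isomorphism away from the four reference points, so it does nothing for the other four base points (the base points of the bicanonical pencil); if $C$ is singular there, the strict transform is too, $\wt C$ is not a member of $|5K_V|$, and $g(\wt C)<16$. To rescue the argument you would have to either (i) prove that a general member of the product system is in fact smooth at those base points --- plausible if the base points of $|2K_V|$ are reduced, since the differential of $\sum\lambda_{ij}s_it_j$ there is a general combination of the independent differentials $ds_0,ds_1$, but this reducedness is not checked and contradicts what the paper asserts --- or (ii) bound the $\delta$-invariants at those points, which is essentially the computation your approach was meant to avoid. The step ``the resolution $\pi$ takes care of the reference points'' also hides work: you need the multiplicity of $\pi^*C$ along each exceptional elliptic curve $E_i$ to be exactly $5$ (so that the strict transform has class $5K_V$) and the strict transform to be smooth along the $E_i$, neither of which is automatic for a curve passing through a simple elliptic singularity.

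The gap matters quantitatively as well as logically. Your fallback observation that one only needs $R\le 2g(\wt C)-2$ still requires the lower bound $g(\wt C)\ge 4$, i.e.\ $\sum_P\delta_P\le 72$ in the paper's normalization $g(\wt C)=76-\sum_P\delta_P$, and you give no control on the $\delta$-invariants at the eight singular points (four of which sit over simple elliptic surface singularities where $\delta$ can be large). So while your Riemann--Hurwitz bookkeeping and the bound $R\le 5$ are correct, the positivity of $g(\wt{C/\tau})$ is not yet established by your argument: the missing piece is precisely a verified lower bound on $g(\wt C)$, which is where all the geometric difficulty of the statement resides.
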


\begin{proof}
The Riemann-Hurwitz formula for non-singular curves give us that
$$2p_g(\wt{C})-2=2(2p_g(\wt{C/\tau})-2)+\deg (R)$$
where $R$ is the ramification divisor. Since the quasi-projective curve obtained by throwing out the singular points and their images under the action of $\tau$ is non-singular. Its image under the quotient map from $C$ to $C/\tau$ is also non-singular. Therefore for any non-singular point $Q$ on $C/\tau$ such that $P$ is mapped to $Q$, and $t$ is a uniformising parameter of the maximal ideal  in $\bcO_Q$, we have
$$e_{\phi}(P)=v_p(\phi^*(t))$$
which is either one or two.
Now degree of the ramification divisor is
$$\sum_{Q\in R}(e_{\phi}(P)-1)\;.$$
Since by the Riemann-Hurwitz formula $\deg(R)$ is even we have only two or four fixed points of the quotient map (the set of singular points of $C$ is disjoint from the set of fixed points on $C$, so the ramification divisor in $\wt{C}$ can have atmost five points) and in that case $R$ consists of either $2$ or $4$ points.
Now we have
$$p_g(\wt{C})=p_a(C)+\sum_{P\in C}\delta_P$$
where $\delta_p$ is the length of $\wt{\bcO_P}/\bcO_P$. Now since $C$ is the complete intersection of two quintics its arithmetic genus is
$$5.5(5+5-4)/2+1=76\;.$$
Now we understand the relation between $\delta_P$ and $\delta_Q$, where $P$ is mapped to $Q$. Consider the map from $\bcO_Q$ to $\bcO_P$. Since we have a $2:1$ map we have that the degree of the field extension $[k(C):k(C/\tau)]=2$. Then we prove that $\wt{\bcO_P}$ is a degree two extension over $\wt{\bcO_Q}$, where these are the integral closures of $\bcO_P$ and $\bcO_Q$ respectively. So suppose that $\alpha$ is a root of a quadratic polynomial over $k(C/\tau)$ which generate $k(C)$. Say
$$\alpha^2+b\alpha+c=0$$
writing $b=b_1/b_2,c=c_1/c_2$ where $b_i,c_i$ belong to $\wt{\bcO_Q}$ we get that $b_2c_2\alpha$ is integral over $\wt{\bcO_Q}$. So it belongs to $\wt{\bcO_P}$. Now since $\wt{\bcO_Q}$ is local ring (since it is integral closure of $\bcO_Q$ in $k(C/\tau)$) we have that $mb$ belong to $\wt{\bcO_Q}[1+a\alpha]$, where $a=b_2c_2$, $m$ belongs to the maximal ideal of $\wt{\bcO_Q}$, $b$ in $\wt{\bcO_P}$. So by Nakayama lemma we have that $\wt{\bcO_P}=\wt{\bcO_Q}[1,a\alpha]$. Since the quotient map is $2:1$ hence finite, similar argument as above shows that we have $\bcO_P=\bcO_Q[1,b\alpha]$. This gives us that $\delta_P=2\delta_Q$. Suppose that $R$ consists of two points. So putting this in the above formula we get that
$$2p_a(C)-2=2(2p_a(C/\tau)-2)+\deg(R)+2\sum_{P\in C} \delta_P-4\sum_{Q\in C/\tau}\delta_Q$$
putting the values we have
$$150=2(2p_g(\wt{C/\tau})-2)-4\sum_i \delta_{Q_i}+2+2\sum_{P\in C}\delta_P+4\sum_{i=1}^4(\delta_{Q_i})$$
Here the eight points of singularities of $C$ are mapped to $Q_1,Q_2,Q_3,Q_4$.
So we get that
$$75+2-1=2p_g(\wt{C/\tau})+(\sum_{P\in C}\delta_{P})$$
suppose that $p_g(\wt{C/\tau})=0$, then we have
$$19=(\sum_{P\in C}\delta_{P})$$
We have to prove that this case does not occur.

For the case when $R$ consists of four points we have by the above
$$75+2-2=75=4(\sum_{P\in C}\delta_{P})$$
which is  absurd because we have two set of singular points $A,B$ of $C$ each consisting $4$ points, which are in the same orbit under the action of $\Sigma$. Then the sum in the right hand side in the above is
$$4(\delta_P+\delta_Q)$$
where $P$ belongs to $A$ and $Q$ belongs to $B$. Since $4$ does not divide $75$, the above inequality cannot be true.

So we only have to prove that for a general member of $\bcD$, its intersection with the line $r$ given by $X=-Z,Y=-T$ consists of four points. First of all we can write the linear system of quintics $\bcD$ as a union of $Z_4\cup Z_2$, where $Z_4,Z_2$ contains the quintics having $4,2$ intersection points with $R$ respectively. We prove  that $Z_2$ is Zariski closed and proper inside $\bcD$ that is $Z_4$ is non-empty.

So consider the pencil generated by $XZ C_0,YT C_1$, that is all linear combinations of the form
$$\lambda XZ C_0+\mu YT C_1$$
then  intersecting such a quintic with $R$, that is putting $X=-Z,Y=-T$
we have
$$XY(\lambda XQ_0-\mu YQ_1)=0$$
where $Q_0,Q_1$ are quadrics appearing in the expression of $C_0,C_1$.
so either $XY=0$, that is either $X=0=Z$ or $Y=0=T$, in which case we obtain two points $[1:0:-1:0],[0:1:0:-1]$ or we have
$$\lambda XQ_0-\mu YQ_1=0$$
writing this equation in terms of polynomials of $\lambda,\mu$ we have that
$$a(\lambda,\mu)X^3+b(\lambda,\mu)X^2+c(\lambda,\mu)X+d(\lambda)=0$$
where $a,b,c,d$ are linear polynomials in $\lambda,\mu$. Now we already have two distinct closed points of intersection of a member of the pencil with the fixed locus. Therefore to have exactly two intersection points with the fixed locus we have the above cubic equation has one root. That can be prescribed by the condition $$9d(\lambda,\mu)a(\lambda,\mu)-b(\lambda,\mu)c(\lambda,\mu)=0$$
Since this is a quadratic in $\lambda,\mu$, so it has two solutions. Hence the set of quintics in the pencil having four intersection points is non-empty, hence $Z_4$ is non-empty. So now we prove that $Z_2$ is  Zariski closed. That is we consider all $[\lambda_0:\lambda_1:\cdots:\lambda_7]$ in $\PR^7$ such that the zero locus of
$$\sum_i \lambda_i \bcQ_i$$
intersects $R$ at two points, where $\bcQ_i$'s are the quintics generating the product linear system. So setting $X=-Z,Y=-T$ in the above equation we get that
$$\sum_i \lambda_i \bcQ_i(X,Y,-X,-Y)=0$$
So we write it down as
$$f(X,Y)=a_0X^5+a_1X^4Y+a_2X^3Y^2+a_3X^2Y^3+a_4XY^4+a_5Y^5=0$$
where $a_i$'s are linear polynomials in $\lambda_i$'s. Since this quintic does not intersect $X=0,Y=0$, it is inside $U\cap V$, where $U,V$ are affine open in $\PR^1$ defined by $X\neq 0,Y\neq 0$.
Suppose the locus of this quintic  has only two closed points. It means that we have two cases.

1) When the equation defined by putting $Y=1$ in the above quintic has one root and the equation defined by putting $X=1$ has one root, and the two closed points corresponding to these two roots are distinct.
In this case we have say $\alpha$ be a root of $f(X,1)$. In that case we get that
$$5\alpha=-a_1/a_0,\quad 10\alpha^2=a_2/a_0,\quad 10 \alpha^3=-a_3/a_0$$
$$5\alpha^4=a_4/a_0,\quad \alpha^5=a_5/a_0\;.$$
So we get that
$$25\alpha^{10}=\frac{a_1a_4a_5}{a_0^3}$$
and
$$100\alpha^5=\frac{a_2a_3}{a_0^2}\;.$$
Eliminating $\alpha$ we get that
$$a_2^2a_3^2=400a_0a_1a_4a_5\;.$$
Similarly if we have $f(1,Y)$, then by doing the same we have the relation
$$a_2^2a_3^2=400a_0a_1a_4a_5\;.$$
This defines a quartic hypersurface in $\PR^7$.

2) The two closed points corresponding to these two roots are the same. Then we have $f(X,1)=f(1,Y)=0$ and these two polynomials has the same roots. In this case we have two roots of the polynomial $f(X,1)$, say $\alpha$, $1/\alpha$, one of multiplicity two and other of multiplicity three. Then the relations between roots and coefficients are
$$3\alpha+2(1/\alpha)=-a_1/a_0$$
$$\alpha^3/\alpha^2=\alpha=a_5/a_0$$
that would mean eliminating $\alpha$ we have
$$3a_5/a_0+2a_0/a_5=-a_1/a_0$$
that is
$$3a_5^2+2a_0^2=-a_1a_5\;.$$
So it defines a quadric in $\PR^7$. So $Z_2$ is Zariski closed and $Z_4$ is non-empty. Since $\bcD\cong \PR^7$ is the union $Z_4\cup Z_2$, we have that $Z_4$ is a non-empty Zariski open subset of $\PR^7$. So for a general member of the linear system of quintics we have the genus of $\wt{C/\tau}$ is positive.
\end{proof}


\end{document}